\definecolor{darkblue}{rgb}{0.0,0.0,0.3}
\theoremstyle{plain}
\newtheorem{theorem}{Theorem}
\newtheorem*{theorem*}{Theorem}
\newtheorem*{proposition*}{Proposition}
\newtheorem*{corollary*}{Corollary}
\theoremstyle{definition}
\renewcommand{\Re}{\operatorname{Re}}
\DeclareMathOperator{\SL}{SL}
\DeclareMathOperator{\MT}{MT}
\title{Non-real Poles and Irregularity of Distribution}
\author{David Lowry-Duda}
\thanks{This work was supported by the Simons Collaboration in Arithmetic
Geometry, Number Theory, and Computation via the Simons Foundation grant 546235.}
\thanks{The author wants to thank Thomas Hulse, Henryk Iwaniec, Chan Ieong Kuan,
and Alexander Walker for their encouraging remarks.}
\begin{document}

\maketitle

\begin{abstract}

We study the general theory of weighted Dirichlet series and associated
summatory functions of their coefficients. We show that any non-real pole leads
to oscillatory error terms. This applies even if there are infinitely many
non-real poles with the same real part. Further, we consider the case when the
non-real poles lie near, but not on, a line. The method of proof is a
generalization of classical ideas applied to study the oscillatory behavior of
the error term in the prime number theorem.

\end{abstract}

\section{Overview and Results}

In this article, we study the general theory of Dirichlet series
$D(s) = \sum_{n \geq 1} a(n) n^{-s}$ and the associated summatory function of
the coefficients, $A(x) = \sum_{n \leq x}' a(n)$ (where the prime over the
summation means the last term is to be multiplied by $1/2$ if $x$ is an
integer). For convenience, we will suppose that the coefficients $a(n)$ are
real, that not all $a(n)$ are zero, that each Dirichlet series converges in some
half-plane, and that each Dirichlet series has meromorphic continuation to
$\mathbb{C}$. Perron's formula
shows that $D(s)/s$ and $A(x)$ are duals and satisfy
\begin{equation}\label{eq:basic_duality}
  \frac{D(s)}{s}
  =
  \int_1^\infty \frac{A(x)}{x^{s+1}} dx,
  \quad
  A(x)
  =
  \frac{1}{2 \pi i} \int_{\sigma - i \infty}^{\sigma + i \infty}
  \frac{D(s)}{s}
  x^s ds
\end{equation}
for an appropriate choice of $\sigma$.

Many results in analytic number theory take the form of showing that $A(x) =
\MT(x) + E(x)$ for a ``Main Term'' $\MT(x)$ and an ``Error Term'' $E(x)$.
Typically the terms in the main term $\MT(x)$ correspond to poles from
$D(s)$, while $E(x)$ is hard to understand. Upper bounds for the error term give
bounds for how much $A(x)$ can deviate from the expected size, and thus describe
the regularity in the distribution of the coefficients $\{a(n)\}$. In this
article, we investigate lower bounds for the error term, corresponding to
\emph{irregularity in the distribution} of the coefficients.

More generally, we can consider a sequence of denominators $0 < \lambda_1 <
\lambda_2 < \cdots$, where $\lambda_n \to \infty$, and consider the generalized
Dirichlet series
\begin{equation*}
  D_\lambda(s) = \sum_{\lambda_n} \frac{a(n)}{\lambda_n^s}
\end{equation*}
with the associated summatory function $A^{(\lambda)}(x) = \sum_{\lambda_n \leq
x}' a(n)$. We note that one can assume without loss of generality that
$\lambda_1 = 1$ by considering the related Dirichlet series
\begin{equation*}
  \lambda_1^s\sum_{\lambda_n} \frac{a(n)}{\lambda_n^s}
  =
  \sum_{\lambda_n} \frac{a(n)}{{(\lambda_n / \lambda_1)}^s}
\end{equation*}
and then translating results.
Thus we will assume our series have been normalized so that $\lambda_1 = 1$.

Results of the form $A^{(\lambda)}(x) = \MT(x) + E(x)$ for these
generalized Dirichlet series are also prevalent in number theory. For example,
one can associate an Epstein zeta function of the form $D_\lambda(s)$ to any
complete lattice $\Lambda \subset \mathbb{R}^d$; in this example, $A_\lambda$
counts the number of lattice points contained within a ball of a given radius.

Further, to get satisfactory understanding of the error terms, it is often
necessary to work with smoothed sums $A^{(\lambda)}_v(x) = \sum_{\lambda_n} a(n)
v(\lambda_n/x)$ for a weight function $v(\cdot)$. In this article, we consider
\emph{nice} weight functions, i.e.\ weight functions with good behavior and
whose Mellin transforms have good behavior. For almost all applications, it
suffices to consider weight functions $v(x)$ on the positive real numbers that
are piecewise smooth and that take values halfway between jump discontinuities.

For such a weight function $v(\cdot)$, denote its Mellin transform by
\begin{equation}
  V(s) = \int_0^\infty v(x)x^{s} \frac{dx}{x}.
\end{equation}
Then we can study the more general dual family
\begin{equation}\label{eq:general_duality}
  D_\lambda(s) V(s)
  =
  \int_1^\infty \frac{A^{(\lambda)}_v(x)}{x^{s+1}} dx,
  \quad
  A^{(\lambda)}_v(x)
  =
  \frac{1}{2 \pi i} \int_{\sigma - i \infty}^{\sigma + i \infty}
  D_\lambda(s) V(s)
  x^s ds.
\end{equation}

We prove two results governing the irregularity of distribution of weighted
sums. Our first theorem guarantees that a non-real pole of $D_\lambda(s)V(s)$
corresponds to an oscillatory error term for $A^{(\lambda)}_v(x)$.

\begin{theorem}\label{thm:single_pole}
  Suppose $D_\lambda(s)V(s)$ has a pole at $s = \sigma_0 + it_0$ with $t_0 \neq
  0$ of order $r$. Let $\MT(x)$ be the sum of the residues of
  $D_\lambda(s)V(s)X^s$ at all real poles $s = \sigma$ with $\sigma \geq
  \sigma_0$.

  Then
  \begin{equation}
    \sum_{\lambda_n} a(n) v(\tfrac{\lambda_n}{x}) - \MT(x)
    =
    \Omega_\pm\big( x^{\sigma_0} \log^{r-1} x\big).
  \end{equation}
\end{theorem}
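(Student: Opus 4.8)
The plan is to adapt the classical Landau oscillation argument — the mechanism behind $\Omega_\pm$-results for $\psi(x)-x$ coming from nonreal zeros of $\zeta$ — to the Mellin setting of \eqref{eq:general_duality}. Write $E(x) := \sum_{\lambda_n} a(n) v(\lambda_n/x) - \MT(x)$, which is real-valued since the $a(n)$ and $v$ are real and $\MT(x)$ is a sum of residues of $D_\lambda(s)V(s)x^s$ at real points, where $D_\lambda(s)V(s)$ is real. The first step is the Mellin bookkeeping for $\MT$: a pole of $D_\lambda(s)V(s)$ of order $m$ at a real $\sigma_j\ge\sigma_0$ contributes to $\MT(x)$ a term $x^{\sigma_j}$ times a polynomial in $\log x$ of degree $m-1$, and since $\int_1^\infty x^{\sigma_j}(\log x)^k x^{-s-1}\,dx = k!\,(s-\sigma_j)^{-k-1}$, one checks that $\int_1^\infty \MT(x)\,x^{-s-1}\,dx$ equals exactly the sum $P(s)$ of the principal parts of $D_\lambda(s)V(s)$ at its finitely many real poles with real part $\ge\sigma_0$. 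Together with \eqref{eq:general_duality} this shows that
\[
  F(s) \ :=\ \int_1^\infty \frac{E(x)}{x^{s+1}}\,dx \ =\ D_\lambda(s)V(s) - P(s)\,,
\]
convergent for $\Re s$ large, continues meromorphically to $\mathbb{C}$; crucially it is holomorphic at every real $s\ge\sigma_0$, since the only real poles of $D_\lambda(s)V(s)$ with real part $\ge\sigma_0$ were cancelled by $P$, and it still has a pole of order exactly $r$ at $\sigma_0+it_0$ with some leading Laurent coefficient $c\ne0$.

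To prove $\Omega_+$, suppose for contradiction that $E(x)\ne\Omega_+\big(x^{\sigma_0}\log^{r-1}x\big)$, so that for each $\varepsilon>0$ there is $x_\varepsilon$ with $E(x)\le\varepsilon x^{\sigma_0}\log^{r-1}x$ for $x\ge x_\varepsilon$. Put $G_\varepsilon(x) := 2\varepsilon x^{\sigma_0}\log^{r-1}x - E(x)$, so that $G_\varepsilon(x)\ge\varepsilon x^{\sigma_0}\log^{r-1}x>0$ for $x\ge x_\varepsilon$, and
\[
  \psi_\varepsilon(s) \ :=\ \int_{x_\varepsilon}^\infty \frac{G_\varepsilon(x)}{x^{s+1}}\,dx
  \ =\ \frac{2\varepsilon(r-1)!}{(s-\sigma_0)^r} \ -\ F(s) \ +\ (\text{an entire function})\,,
\]
the entire term being the contribution of $\int_1^{x_\varepsilon}$. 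The integral converges for $\Re s$ large, so its abscissa of convergence $\sigma_c$ is finite; since $G_\varepsilon$ is bounded below by a positive multiple of $x^{\sigma_0}\log^{r-1}x$ for large $x$, we have $\sigma_c\ge\sigma_0$; and since $G_\varepsilon\ge0$ on $[x_\varepsilon,\infty)$, Landau's theorem on the singularity of a Dirichlet integral with nonnegative integrand forces $\sigma_c$ to be a singularity of $\psi_\varepsilon$. But the only singularities of the right-hand side are $\sigma_0$ together with the poles of $F$, none of which is real and $\ge\sigma_0$; hence $\sigma_c=\sigma_0$.

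Finally, positivity of $G_\varepsilon$ on $[x_\varepsilon,\infty)$ gives, for real $\sigma>\sigma_c=\sigma_0$, the pointwise bound $|\psi_\varepsilon(\sigma+it_0)|\le\psi_\varepsilon(\sigma)$. Let $\sigma\to\sigma_0^+$: on the real axis $F(\sigma)$ and the entire term stay bounded while the explicit term blows up, so $\psi_\varepsilon(\sigma)\sim 2\varepsilon(r-1)!\,(\sigma-\sigma_0)^{-r}$; at $\sigma+it_0$ the explicit term stays bounded while $F$ contributes its order-$r$ pole, so $\psi_\varepsilon(\sigma+it_0)\sim -c\,(\sigma-\sigma_0)^{-r}$. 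Multiplying the inequality by $(\sigma-\sigma_0)^r$ and letting $\sigma\to\sigma_0^+$ yields $|c|\le 2\varepsilon(r-1)!$ for every $\varepsilon>0$, so $c=0$, a contradiction. Hence $E(x)=\Omega_+\big(x^{\sigma_0}\log^{r-1}x\big)$, and repeating the argument with $G_\varepsilon(x)=2\varepsilon x^{\sigma_0}\log^{r-1}x+E(x)$ gives $\Omega_-$.

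The step I expect to be the main obstacle is the identification $\sigma_c=\sigma_0$: it rests on verifying that subtracting $\MT$ strips $F$ of every real pole of $D_\lambda(s)V(s)$ in $[\sigma_0,\infty)$, so that Landau's theorem is compelled to place the abscissa of convergence of $\psi_\varepsilon$ precisely on the line $\Re s=\sigma_0$ that carries the pole at $\sigma_0+it_0$; once that is secured, the comparison $|\psi_\varepsilon(\sigma+it_0)|\le\psi_\varepsilon(\sigma)$ does the work, and the remaining manipulations are routine estimates for nonnegative Dirichlet integrals.
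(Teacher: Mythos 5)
Your proof is correct and follows essentially the same route as the paper: subtract the real-pole contributions so that $F(s)$ has no real singularities with real part $\geq \sigma_0$, assume a one-sided bound, apply Landau's theorem to the resulting eventually-nonnegative Dirichlet integral to force its abscissa of convergence down to $\sigma_0$, and compare $\lvert \psi_\varepsilon(\sigma+it_0)\rvert \leq \psi_\varepsilon(\sigma)$ after multiplying by $(\sigma-\sigma_0)^r$ and letting $\sigma\to\sigma_0^+$ to trap the leading Laurent coefficient. The only differences are cosmetic—you integrate from $x_\varepsilon$ rather than carrying the constant $K$, and you let $\varepsilon\to 0$ instead of fixing one small constant—and your sign bookkeeping (comparing $E(x)$ against $\pm\,\varepsilon x^{\sigma_0}\log^{r-1}x$ so that the contradiction really delivers $\Omega_+$ and $\Omega_-$ respectively) is, if anything, arranged more carefully than in the paper's write-up.
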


Here and below, we use the notation $f(x) = \Omega_+(g(x))$ to mean that there
is a constant $k > 0$ such that $\limsup f(x)/\lvert g(x) \rvert > k$ and $f(x)
= \Omega_- (g(x))$ to mean that $\liminf f(x)/\lvert g(x) \rvert < -k$. When
both are true, we write $f(x) = \Omega_\pm (g(x))$. This roughly means that
$f(x)$ is about as positive as $\lvert g(x) \rvert$ and about as negative as
$-\lvert g(x) \rvert$ infinitely often.

Our second theorem concerns the case when there are infinitely many non-real
poles very near a line.

\begin{theorem}\label{thm:many_poles}
  Suppose $D_\lambda(s)V(s)$ has at least one non-real pole, and that the
  supremum of the real parts of the non-real poles of $D_\lambda(s)V(s)$ is
  $\sigma_0$. Let $\MT(x)$ be the sum of the residues of $D_\lambda(s)V(s)X^s$
  at all real poles $s = \sigma$ with $\sigma \geq \sigma_0$.

  Then for any $\epsilon > 0$,
  \begin{equation}
    \sum_{\lambda_n} a(n) v(\tfrac{\lambda_n}{x}) - \MT(x)
    =
    \Omega_\pm( x^{\sigma_0 - \epsilon} ).
  \end{equation}
\end{theorem}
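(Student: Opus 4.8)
The plan is to derive Theorem~\ref{thm:many_poles} from Theorem~\ref{thm:single_pole} by applying the latter at one carefully chosen non-real pole. The subtlety to keep in mind is that Theorem~\ref{thm:single_pole}, applied at a pole $\rho$, requires the main term to absorb the residues at \emph{all} real poles with real part $\geq \Re \rho$, whereas the $\MT(x)$ of Theorem~\ref{thm:many_poles} only absorbs those with real part $\geq \sigma_0$. So I must choose a non-real pole $\rho$ whose real part is simultaneously close to $\sigma_0$ and strictly larger than every real pole lying below $\sigma_0$; the gap between these two requirements is exactly what costs the $\epsilon$ in the exponent.

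Concretely, fix $\epsilon > 0$. Since $D_\lambda(s)V(s)$ is meromorphic on $\mathbb{C}$, its real poles are isolated, so only finitely many lie in the bounded interval $[\sigma_0 - \epsilon, \sigma_0)$; let $\sigma^\ast$ be the largest such pole if one exists, and otherwise set $\sigma^\ast = \sigma_0 - \epsilon$. In either case $D_\lambda(s)V(s)$ has no real pole in $(\sigma^\ast, \sigma_0)$. Because the set of non-real poles is nonempty and the supremum of their real parts is $\sigma_0$, I may pick a non-real pole $\rho$ with $\sigma^\ast < \Re\rho \leq \sigma_0$; write $\beta = \Re\rho$ and let $r$ be the order of $\rho$. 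With this choice the real poles of real part $\geq \beta$ are precisely the real poles of real part $\geq \sigma_0$, so the main term produced by Theorem~\ref{thm:single_pole} at $\rho$ is literally the $\MT(x)$ of Theorem~\ref{thm:many_poles}, and Theorem~\ref{thm:single_pole} gives
\begin{equation*}
  \sum_{\lambda_n} a(n) v(\tfrac{\lambda_n}{x}) - \MT(x) = \Omega_\pm\big( x^{\beta} \log^{r-1} x \big).
\end{equation*}
Since $\beta > \sigma^\ast \geq \sigma_0 - \epsilon$, we have $x^{\beta}\log^{r-1}x \big/ x^{\sigma_0 - \epsilon} \to \infty$, so the displayed $\Omega_\pm$-bound immediately implies $\Omega_\pm\big(x^{\sigma_0 - \epsilon}\big)$; as $\epsilon$ was arbitrary, this proves the theorem.

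The step I expect to matter most is conceptual rather than computational: confirming that choosing $\beta$ above $\sigma^\ast$ leaves no real pole ``uncovered,'' since an uncovered real pole in $[\beta,\sigma_0)$ would contribute a term of size up to $x^{\sigma_0-\eta}$ that could dominate the $x^{\beta}$ oscillation and destroy the argument. Everything else is bookkeeping, and the fact that $\sigma_0<\infty$ follows from $D_\lambda(s)V(s)$ being holomorphic in a right half-plane. If one preferred not to invoke Theorem~\ref{thm:single_pole}, the same conclusion is reachable directly: the analytic continuation of $\int_1^\infty (A^{(\lambda)}_v(x) - \MT(x))x^{-s-1}\,dx$ equals $D_\lambda(s)V(s)$ minus the principal parts at the real poles $\geq\sigma_0$, hence is holomorphic on the real ray $(\sigma^\ast,\infty)$ but has a pole at $\rho$; assuming the error term failed to be $\Omega_+(x^{\sigma_0-\epsilon})$ would turn a suitable shift of this function into a Dirichlet integral with nonnegative integrand whose abscissa of convergence would have to lie left of $\Re\rho$, contradicting Landau's theorem that the abscissa is a singularity, and the $\Omega_-$ case is symmetric.
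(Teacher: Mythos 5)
Your proposal is correct, but it takes a genuinely different route from the paper. The paper proves Theorem~\ref{thm:many_poles} directly and independently of Theorem~\ref{thm:single_pole}: it forms the Dirichlet integral $F(s)$ of the error term, observes that $F$ has no real poles with real part $\geq \sigma_0$, compares against $kx^c$ for an arbitrary $c < \sigma_0$, and invokes Landau's theorem to force the abscissa of convergence (which must be $\geq \sigma_0$ because non-real poles approach that line) to be a real singularity, a contradiction; no individual pole is singled out and its order never enters. You instead deduce Theorem~\ref{thm:many_poles} as a corollary of Theorem~\ref{thm:single_pole}, and the one genuine subtlety --- that the main term in Theorem~\ref{thm:single_pole} applied at a pole $\rho$ with $\Re\rho = \beta < \sigma_0$ must coincide with the $\MT(x)$ of Theorem~\ref{thm:many_poles} --- is handled correctly by choosing $\beta$ above the largest real pole $\sigma^\ast$ in $[\sigma_0-\epsilon,\sigma_0)$, so that the sets of real poles with real part $\geq \beta$ and $\geq \sigma_0$ agree; there is no circularity, since the paper's proof of Theorem~\ref{thm:single_pole} does not use Theorem~\ref{thm:many_poles}. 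Your route is shorter and even yields the marginally stronger bound $\Omega_\pm(x^{\beta}\log^{r-1}x)$ with $\beta > \sigma_0 - \epsilon$, at the cost of leaning on the full strength of Theorem~\ref{thm:single_pole}; the paper's direct argument is self-contained and only needs the existence of non-real poles with real parts arbitrarily close to $\sigma_0$, not knowledge of any particular pole or its order. One small phrasing point: ``the real poles are isolated, so only finitely many lie in a bounded interval'' should really appeal to the fact that the poles of a function meromorphic on $\mathbb{C}$ form a closed discrete set (equivalently, the paper's remark that there are finitely many real poles in any half-plane $\Re s > \sigma$), since isolated points alone could in principle accumulate; this is cosmetic and does not affect the argument.
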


The idea at the core of these theorems is old, and was first noticed during the
investigation of the error term in the prime number theorem. To prove them, we
generalize those techniques. In particular, we generalize the proofs given in
Chapter 5 of Ingham's 1932 monograph~\cite{ingham1990distribution} (recently
republished). In his monograph, Ingham describes a proof showing that $\psi(x) -
x = \Omega_\pm(x^{\Theta - \epsilon})$ and $\psi(x) - x = \Omega_\pm(x^{1/2})$,
where $\psi(x) = \sum_{p^n \leq x} \log p$ is Chebyshev's second function and
$\Theta \geq \frac{1}{2}$ is the supremum of the real parts of the non-trivial
zeros of $\zeta(s)$. We note that modern monographs or summaries on topics
concerning analysis of $\zeta(s)$ will have these results, but as far as the
author knows they are never treated in the same level of generality as we
present here.

Thus in this article, we generalize these techniques and extend them to
weighted sums and more general Dirichlet series.

\section*{Motivation and Application}

Infinite lines of poorly understood poles appear regularly while studying
shifted convolution series of the shape
\begin{equation}
  D(s) = \sum_{n \geq 1} \frac{a(n) a(n \pm h)}{n^s}
\end{equation}
for a fixed $h$. When $a(n)$ denotes the (non-normalized) coefficients of a
weight $k$ cuspidal Hecke eigenform on a congruence subgroup of $\SL(2,
\mathbb{Z})$, for instance, one can provide meromorphic continuation for the
shifted convolution series $D(s)$ via spectral expansion in terms of Maass
forms and Eisenstein series. The Maass forms contribute infinite lines of
poles.

Explicit asymptotics would take the form
\begin{equation}
  \sum_{n \geq 1} a(n)a(n-h) e^{-n/X}
  =
  \sum_j C_j X^{\frac{1}{2} + \sigma_j + it_j} \log^m X
\end{equation}
where neither the residues nor the imaginary parts $it_j$ are well-understood.
Is it be possible for these infinitely many rapidly oscillating terms to
experience massive cancellation for all $X$? The theorems above prove that this
is not possible.

In this case, applying Theorem~\ref{thm:single_pole} with the Perron-weight
\begin{equation}
  v(x)
  =
  \begin{cases}
    1 & x < 1 \\
    \frac{1}{2} & x = 1 \\
    0 & x > 1
  \end{cases}
\end{equation} shows that
\begin{equation}
  \sideset{}{'}\sum_{n \leq X} \frac{a(n)a(n-h)}{n^{k-1}}
  =
  \Omega_\pm(\sqrt X).
\end{equation}
Similarly, Theorem~\ref{thm:many_poles} shows that
\begin{equation}
  \sideset{}{'}\sum_{n \leq X} \frac{a(n)a(n-h)}{n^{k-1}}
  =
  \Omega_\pm(X^{\frac{1}{2} + \Theta - \epsilon}),
\end{equation}
where $\Theta < 7/64$ is the supremum of the deviations to Selberg's Eigenvalue
Conjecture (sometimes called the non-arithmetic Ramanujan Conjecture).

More generally, these shifted convolution series appear when studying the sizes
of sums of coefficients of modular forms. A few years ago, Hulse, Kuan, Walker,
and the author began an investigation of the Dirichlet series whose coefficients
were themselves $\lvert A(n) \rvert^2$ (where $A(n)$ is the sum of the first $n$
coefficients of a modular form). We showed that this Dirichlet series has
meromorphic continuation to $\mathbb{C}$~\cite{hkldw1}, but also that there are
lines of poles coming from Maass forms as described above. The behavior of the
infinite lines of poles in the discrete spectrum play an important role in the
analysis, but we did not yet understand how they affected the resulting
asymptotics. The author intends to revisit these results, and others, from the
context of this article.

\section{Proofs}\label{sec:proofs}

Given a function $f(x)$ that is bounded and integrable over any finite interval,
the Dirichlet integral
\begin{equation}
  F(s) = \int_1^\infty \frac{f(x)}{x^{s+1}} dx
\end{equation}
behaves in many ways like a Dirichlet series. Most classical results for
Dirichlet series correspond to analogous results for Dirichlet integrals. For
instance, one can prove that if a Dirichlet integral converges at $s = \sigma_1
+ i t_1$, then it converges for all $s$ with $\Re s > \sigma_1$ in the same way
one proves it for Dirichlet series, except one replaces summation-by-parts with
integration-by-parts. Thus a Dirichlet integral will also have a well-defined
abscissa of convergence.

We will make repeated reference to the following theorem of Landau. It is often
presented only for Dirichlet series, but it also is true for Dirichlet
integrals.

\begin{theorem}[A Theorem of Landau]
  Suppose that $f(x)$ is bounded, integrable in any finite interval, and of
  constant sign for all sufficiently large $x$. Then the real part $s =
  \sigma_1$ of the abscissa of convergence of the Dirichlet integral $F(s)$ is a
  singularity.
\end{theorem}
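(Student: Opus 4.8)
The plan is to follow the classical Landau argument for Dirichlet series, using the fact (noted in the preceding discussion) that the elementary theory of Dirichlet integrals mirrors that of Dirichlet series, and exploiting the sign hypothesis through the principle that an integral whose integrand has constant sign converges exactly when it converges absolutely.

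First I would reduce to the case $f(x) \ge 0$ for all $x \ge 1$. Replacing $f$ by $-f$ alters neither the abscissa of convergence of $F$ nor the set of singular points of $F$, so we may assume $f(x) \ge 0$ for $x \ge x_0$; and splitting $F(s) = \int_1^{x_0} f(x)x^{-s-1}\,dx + \int_{x_0}^{\infty} f(x)x^{-s-1}\,dx$, the first term is entire (its integrand is bounded on a finite interval), so it changes neither $\sigma_1$ nor whether $\sigma_1$ is a singularity of $F$; thus we may take $x_0 = 1$. (If $\sigma_1 = \pm\infty$ the assertion is vacuous, so assume $\sigma_1 \in \mathbb{R}$.) With $f \ge 0$, for real $s$ the integral converges if and only if it converges absolutely, hence it converges absolutely throughout $\{\Re s > \sigma_1\}$; on that half-plane $F$ is holomorphic and may be differentiated under the integral sign, giving $F^{(k)}(s) = (-1)^k\int_1^\infty f(x)(\log x)^k x^{-s-1}\,dx$. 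In particular, writing $a = \sigma_1 + 1$, we get $(-1)^k F^{(k)}(a) \ge 0$ for every $k \ge 0$.

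Next, suppose toward a contradiction that $F$ is holomorphic at $s = \sigma_1$. Then every singularity $s_0$ of $F$ satisfies either $\Re s_0 < \sigma_1$ — whence $|a - s_0| \ge \sigma_1 + 1 - \Re s_0 > 1$ — or $s_0 = \sigma_1 + it_0$ with $t_0 \ne 0$ — whence $|a - s_0| = \sqrt{1 + t_0^2} > 1$. So the Taylor series of $F$ about $a$ has radius of convergence strictly greater than $1$, and therefore converges at some real $b$ with $\sigma_1 - \delta < b < \sigma_1$ for a suitable $\delta > 0$. Since $b - a < 0$ and $(-1)^k F^{(k)}(a) \ge 0$, the expansion
\[
  F(b) = \sum_{k \ge 0}\frac{F^{(k)}(a)}{k!}(b-a)^k = \sum_{k\ge 0}\frac{(a-b)^k}{k!}\int_1^\infty f(x)(\log x)^k x^{-a-1}\,dx
\]
is a convergent series of nonnegative terms; by Tonelli's theorem I may interchange the sum and the integral, and summing the exponential series $\sum_k \big((a-b)\log x\big)^k/k! = x^{a-b}$ gives
\[
  F(b) = \int_1^\infty f(x)\, x^{-a-1}\, x^{a-b}\,dx = \int_1^\infty f(x)\, x^{-b-1}\,dx < \infty .
\]
Thus the Dirichlet integral converges absolutely at the real point $b < \sigma_1$, contradicting the fact that $\sigma_1$ is (the real part of) its abscissa of convergence; hence $\sigma_1$ must be a singularity of $F$.

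The individual steps are routine. The only place the hypotheses genuinely do work — and the point I would be careful to state cleanly — is the equivalence of convergence and absolute convergence for a constant-sign integrand: this is what legitimizes both the differentiation under the integral sign on $\{\Re s > \sigma_1\}$ and the Tonelli interchange, and it is the reason the argument collapses without a sign condition. I would record the needed elementary facts about Dirichlet integrals (existence of an abscissa of convergence, holomorphy in the half-plane of convergence, differentiation under the integral, and the coincidence of the abscissae of convergence and of absolute convergence when the integrand has constant sign) as short remarks in the spirit of those already made, after which the argument above is essentially immediate.
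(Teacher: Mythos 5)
Your proof is correct, and it is essentially the argument the paper appeals to: the paper does not prove this statement itself but cites Ingham's Theorem H with the remark that the proof is analogous to the Dirichlet-series case, and your write-up is exactly that classical Landau argument (reduction to $f \ge 0$, nonnegativity of $(-1)^k F^{(k)}(a)$ at $a = \sigma_1 + 1$, Taylor expansion past the presumed-regular point $\sigma_1$, and the Tonelli resummation yielding absolute convergence at some $b < \sigma_1$) transplanted from series to integrals. The only step I would tighten is the claim that the radius of convergence about $a$ exceeds $1$: rather than arguing from distances to individual singularities, note that $F$ is holomorphic on the open set $\{\Re s > \sigma_1\} \cup \{ \lvert s - \sigma_1 \rvert < \eta \}$, which contains a closed disc $\lvert s - a \rvert \le 1 + \delta$ for small $\delta$, which gives the needed radius directly.
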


\begin{proof}
  This is Theorem H in~\cite{ingham1990distribution}. The proof is
  analogous to proofs for the Dirichlet series version.
\end{proof}

Since $D_\lambda(s)$ is meromorphic on $\mathbb{C}$ and has a half-plane of
convergence, there are at most finitely many real poles in any half-plane $\Re s
> \sigma$.
Near a pole $s = u$ of order $r$, $D_\lambda(s) V(s)$ has a Laurent expansion
\begin{equation}\label{eq:DV_laurent}
  \sum_{m = 1}^{r} \frac{c_m}{{(s-u)}^m} + \phi_u(s)
\end{equation}
for an analytic function $\phi_u(s)$. Then the residue of $D_\lambda(s)V(s)X^s$
at $s = u$ can be written as
\begin{equation}\label{eq:DVX_residue}
  \sum_{m = 1}^{r} \frac{c_m}{(m-1)!}X^m \log^{m - 1} X.
\end{equation}
Finally, note that
\begin{equation}\label{eq:integral_eval}
  \int_1^\infty \frac{{(\log x)}^m}{x^{s + 1}} dx
  =
  \frac{m!}{s^{m+1}}.
\end{equation}

We are now ready to prove the two $\Omega_\pm$ theorems.

\begin{proof}[Proof of Theorem~\ref{thm:many_poles}]

Let $\sigma_0$ be the supremum of the real parts of the poles poles $s = \sigma
+ it$ with $t \neq 0$ of $D_\lambda(s)V(s)$; let $\MT(x)$ denote the sum of the
residues at the (finitely many) real poles $s = \sigma + it$ of
$D_\lambda(s)V(s)X^s$ with $\sigma \geq \sigma_0$.

Let $f(x) = A^{(\lambda)}_v(x) - \MT(x)$, and consider the Dirichlet integral
\begin{equation}\label{eq:def_Fs}
  F(s) = \int_1^\infty \frac{f(x)}{x^{s+1}} dx.
\end{equation}
For $s$ in the half-plane of absolute convergence of $D_\lambda(s)V(s)$, we
recognize
\begin{equation}
  F(s)
  =
  \int_1^\infty \frac{A^{(\lambda)}_v(x)}{x^{s+1}} dx
  -
  \int_1^\infty \frac{\MT(x)}{x^{s+1}} dx
  =
  D_\lambda(s)V(s) - (**)
\end{equation}
where $(**)$ is a sum of terms of the form~\eqref{eq:integral_eval}. Thus $F(s)$
has meromorphic continuation to $\mathbb{C}$ and the poles of $F(s)$ are a
subset of the poles of $D_\lambda(s)V(s)$. At any pole $s = \sigma + it$ with
$\sigma \geq \sigma_0$, one can check that the Laurent
expansion~\eqref{eq:DV_laurent} cancels with the part of $\int_1^\infty
\MT(x)x^{-s-1}dx$ corresponding to that pole. Thus $F(s)$ has no real pole $s =
\sigma + it$ with $\sigma \geq \sigma_0$.

Fix any real $k, c$ with $k > 0$ and $c < \sigma_0$. Define
\begin{equation}
  g(x) := A^{(\lambda)}_v(x) - \MT(x) - k x^c.
\end{equation}
Suppose for the sake of contradiction that $g(x) > 0$ for all sufficiently large
$x$. Then by Landau's Theorem, we know that the abscissa of convergence of
\begin{equation}
  G(s) = \int_1^\infty \frac{g(x)}{x^{s+1}}dx = F(s) - \frac{k}{(s - c)}
\end{equation}
is real. Denote this abscissa of convergence by $\sigma_1$. As $F(s)$ has a
non-real pole with real part greater than $\sigma_0 - \epsilon$ for any
$\epsilon > 0$, we have that $\sigma_1 \geq \sigma_0$.

On the other hand, any real pole $s = \sigma + it$ of $G(s)$ with $\sigma \geq
\sigma_0$ must come from $F(s)$. But we have demonstrated that $F(s)$ has no
real poles with real part at least $\sigma_0$. By contradiction, we have shown
that $A^{(\lambda)}_v(x) - \MT(x) = \Omega_-(x^c)$ for any $c < \sigma_0$. A similar
contradiction proves the corresponding $\Omega_+$ result, concluding the proof.
\end{proof}

The proof of Theorem~\ref{thm:single_pole} begins in the same way.

\begin{proof}[Proof of Theorem~\ref{thm:single_pole}]

Let $\sigma_0 + it_0, t_0 \neq 0$ denote a non-real pole of $D_\lambda(s)V(s)$
of order $r$, and let $\MT(x)$ denote the sum of residues at the real poles $s =
\sigma + it$ with $\sigma \geq \sigma_0$. As in the proof of
Theorem~\ref{thm:many_poles}, let $f(x) = A^{(\lambda)}_v(x) - \MT(x)$; consider
the Dirichlet integral $F(s)$ from~\eqref{eq:def_Fs}; it remains true that
$F(s)$ has no real poles with real part $\geq \sigma_0$.

Let $g_k(x)$ denote
\begin{equation}
  g_k(x) = A^{(\lambda)}_v(x) - \MT(x) - k x^{\sigma_0} \log^{r-1} x
\end{equation}
for a real constant $k > 0$ to be specified later. Let us suppose that $g_k(x) >
0$ for all sufficiently large $x$. We will show that taking $k$ sufficiently
small will lead to a contradiction.

By Landau's Theorem, we know that the abscissa of convergence, $\sigma_1$, of
\begin{equation}\label{eq:second_residue}
  G_k(s) = \int_1^\infty \frac{g_k(x)}{x^{s+1}} dx
  =
  F(s) - \frac{k/(r-1)!}{{(s-\sigma_0)}^r}
\end{equation}
is real. The pole at $\sigma_0 + it_0$ guarantees that $\sigma_1 \geq \sigma_0$
(but the pole may correspond to the pole coming from $x^\sigma \log^{r-1} x$).

Fix $Y$ such that $g_k(x) > 0$ for all $x > Y$. For any $\sigma > \sigma_1$, we
have that
\begin{equation}
  \lvert G_k(\sigma + it_0) \rvert
  \leq
  \int_1^Y \frac{\lvert g_k(x) \rvert}{x^{\sigma + 1}} dx
  +
  \int_Y^\infty \frac{g_k(x)}{x^{\sigma + 1}} dx
  =
  G_k(\sigma)
  +
  \int_1^Y
    \frac{\lvert g_k(x) \rvert - g_k(x)}{x^{\sigma + 1}}
  dx.
\end{equation}
Note also that
\begin{equation}
  \int_1^Y
    \frac{\lvert g_k(x) \rvert - g_k(x)}{x^{\sigma + 1}}
  dx
  \leq
  2 \int_1^Y \frac{\lvert g_k(x) \rvert}{x^{\sigma_0 + 1}} dx = K
\end{equation}
for a finite constant $K$ that is independent of $\sigma$ and $t$.
Thus
\begin{equation}\label{eq:pre_contradiction}
  \lvert G_k(\sigma + it_0) \rvert
  \leq
  K + G_k(\sigma).
\end{equation}

Multiply each side of~\eqref{eq:pre_contradiction} by ${(\sigma - \sigma_0)}^r$.
We will evaluate the limit as $\sigma \to \sigma_0$ from the right. In terms of
the Laurent expansions, this has the effect of comparing the (absolute value of
the) Laurent coefficients of ${(s - (\sigma_0 + it_0))}^{-r}$ in $G_k(\sigma +
it_0)$ on the left and of ${(s - \sigma_0)}^{-r}$ in $G_k(\sigma)$ on the right.
In particular, if we specialize~\eqref{eq:DV_laurent} to the pole at $\sigma_0 +
it_0$, then $\lim_{\sigma \to \sigma_0^+} \lvert G_k(\sigma + it_0) {(\sigma -
\sigma_0)}^r \rvert = \lvert c_r \rvert$. On the other hand,
from~\eqref{eq:second_residue}, we see that $\lim_{\sigma \to \sigma_0^+}(K +
G_k(\sigma)) = k/{(r-1)!}$.

Thus multiplying each side of~\eqref{eq:pre_contradiction} by ${(\sigma -
\sigma_0)}^r$ and taking the limit as $\sigma \to \sigma_0$ from the right gives
the inequality
\begin{equation}
  \lvert c_r \rvert \leq \frac{k}{(r-1)!}.
\end{equation}
But $k$ can be chosen freely, and it is clear that choosing $k < (r-1)!\lvert
c_r \rvert$ leads to a contradiction. Thus $g_k(x) = \Omega_-(x^{\sigma_0}
\log^{r-1} x)$. The $\Omega_+$ result is proved similarly.
\end{proof}

\vspace{20 mm}
\bibliographystyle{alpha}
\bibliography{bibfile}

\begin{thebibliography}{HKLDW17}

\bibitem[HKLDW17]{hkldw1}
Thomas~A. Hulse, Chan~Ieong Kuan, David Lowry-Duda, and Alexander Walker.
\newblock The second moment of sums of coefficients of cusp forms.
\newblock {\em Journal of Number Theory}, 173:304--331, 2017.

\bibitem[Ing90]{ingham1990distribution}
A.~E. Ingham.
\newblock {\em The distribution of prime numbers}.
\newblock Cambridge Mathematical Library. Cambridge University Press,
  Cambridge, 1990.
\newblock Reprint of the 1932 original, With a foreword by R. C. Vaughan.

\end{thebibliography}

\end{document}